\documentclass{amsart}
\usepackage{amsmath, amsfonts, amssymb,amsthm}

\newtheorem{lemma}{Lemma}

\newtheorem*{theorem*}{Theorem}
\newtheorem*{main}{Main Theorem}
\newtheorem*{first}{First Main Theorem}
\newtheorem*{theorem R}{Theorem R}

\theoremstyle{definition}

\newcommand{\C}{\mathbb{C}}

\newcommand{\N}{\mathbb{N}}

\newcommand{\PP}{\mathbb{P}}
\newcommand{\ii}{{\bf{i}}}

\newcommand{\Z}{\mathbb{Z}}

\let\a\alpha   
  \let\g\gamma

\begin{document}
\title[An explicit estimate on multiplicity truncation]{An explicit estimate on multiplicity truncation in 
the second main theorem for holomorphic curves encountering hypersurfaces
in general position in projective space}
\author{Ta Thi Hoai An$^*$}
\address{Institute of Mathematics\\
18 Hoang Quoc Viet, Cau Giay\\
Hanoi, VietNam}
\email{tthan@math.ac.vn}
\author{Ha Tran Phuong}
\address{Department of Mathematics, Thai Nguyen University of education}
\email{hatranphuong@yahoo.com}
\thanks{*Junior Associate Member of ICTP, Trieste, Italy}
 \subjclass[2000]{32H30}
\begin{abstract}
Yan and Chen proved a weak Cartan-type second main theorem for 
holomorphic curves meeting hypersurfaces in projective
space that included truncated counting functions.
Here we give an explicit estimate for the level of truncation.
\end{abstract}
\maketitle 

\section{ Introduction}
We begin by recalling some standard notation 
and results from Nevanlinna theory.

Let  $f: \C \rightarrow \PP^n(\C)$  be a holomorphic map. Let $f=(f_0: \dots :f_n)$ be a reduced representative of $f$, where $f_0,\dots, f_n$ are  entire functions on $\C$ without common zeros. The Nevanlinna-Cartan characteristic function $T_f(r)$ is defined by
$$T_f(r) = \dfrac{1}{2\pi} \int_0^{2\pi} \log\|f(re^{i\theta})\|d\theta,$$
where $\|f(z)\| = \max\{|f_0(z)|,\dots ,|f_n(z)|\}$.
The above definition is independent, up to an additive constant, of the choice of the reduced representation of $f$.
Let $D$ be a hypersurface in $\PP^n(\C)$ of degree $d$. Let $Q$ be the homogeneous polynomial of degree $d$ defining $D$. The proximity function of $f$ is defined by
$$ m_f(r,D) = \dfrac{1}{2\pi}\int_0^{2\pi} \log\dfrac{\| f(re^{i\theta})\|^d}{|Q\circ f(re^{i\theta})|} d\theta.$$
The above definition is independent, up to an additive constant, of the choice of the reduced representation of $f$ and the choice of the defining
polynomial $Q$.
Let $n_f(r,D)$ be the number of zeros of $Q\circ f$ in the disk $|z|\le r$, counting multiplicity, and $n_f^M(r,D)$ be the number of zeros of $Q\circ f$ in the disk $|z| \le r$, where any zero of multiplicity greater than $M$
is ``truncated'' and counted as if it only had multiplicity $M.$
The integrated counting and truncated counting functions are defined by
$$N_f(r,D) = \int_0^r \dfrac{n_f(t,D)- n_f(0,D)}{t} dt - n_f(0,D)\log r.$$
$$N_f^M(r,D) = \int_0^r \dfrac{n_f^M(t,D)- n_f^M(0,D)}{t} dt - n_f^M(0,D)\log r.$$
When we want to emphasize $Q,$ we sometimes also write 
$N_f(r,D)$ as $N_f(r,Q)$ and $N^M_f(r,D)$ as  $N^M_f(r,Q).$

A consequence of the Poisson-Jensen formula is the following:
 
\begin{first}  Let $f: \C \rightarrow \PP^n(\C)$ be a holomorphic map, and D be a hypersurface in $\PP^n(\C)$ of degree d. If $f(\C) \not\subset D$, then for every real number r with $0 < r < \infty$
$$m_f(r,D) + N_f(r,D) = dT_f(r) + O(1),$$
where $O(1)$ is a constant independent of $r$.
\end{first}

Recall that hypersurfaces $D_1, \dots,D_q, q > n$, in $\PP^n(\C)$
are said to be in general position if for any distinct $i_1,\dots , i_{n+1} \in \{1,\dots,q\}$,
$$\bigcap_{k=1}^{n+1}\mathrm{supp}(D_{i_k}) = \emptyset.$$

Yan and Chen \cite{YC} proved:

\begin{theorem*}[Yan and Chen] Let $f: \C \rightarrow \PP^n(\C)$ be an algebraically non-degenerate holomorphic map, and let $D_j, 1 \leqslant j \leqslant q,$ be hypersurfaces in $\PP^n(\C)$ of degree $d_j$ in general position. Then for every $\varepsilon >0$, there exists a positive integer $M$ such that
$$ (q-(n+1)-\varepsilon )T_f(r) \leqslant \sum_{j=1}^q d_j^{-1} N_f^M(r,D_j) + o(T_f(r)),$$
where inequality holds for all large $r$ outside a set of finite Lebesgue measure.
\end{theorem*}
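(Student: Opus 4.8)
The plan is to run the filtration method of Corvaja--Zannier and Evertse--Ferretti in its holomorphic-curve form (Ru and others): pass to the $N$-th Veronese embedding for a suitably large $N$, so that $f$ becomes a linearly non-degenerate curve $F$ and the hypersurfaces become hyperplanes; apply Cartan's Second Main Theorem to $F$; and recover the truncated counting functions from Cartan's Wronskian term, the truncation level $M$ coming out as a function of $N$ (and $N$ as a function of $\varepsilon$, $n$, and the $d_j$). As a first reduction, replacing each $Q_j$ by $Q_j^{d/d_j}$ with $d=\mathrm{lcm}(d_1,\dots,d_q)$ leaves the supports, hence the general-position hypothesis, unchanged, multiplies $N_f(r,Q_j)$ by $d/d_j$, and --- using the elementary inequality $\tfrac1e\min(ek,M)\le\min(k,M)$ for the truncated counts --- shows that the desired bound with truncation level $M$ for the $Q_j^{d/d_j}$ implies the asserted one, with the same $M$, for the $D_j$. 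So we may assume $d_1=\cdots=d_q=d$. Summing the First Main Theorem in the form $\tfrac1d m_f(r,D_j)+\tfrac1d N_f(r,D_j)=T_f(r)+O(1)$ over $j$, the conclusion becomes equivalent to
$$\sum_{j=1}^{q}\frac1d m_f(r,D_j)+\sum_{j=1}^{q}\frac1d\bigl(N_f(r,D_j)-N_f^M(r,D_j)\bigr)\le(n+1+\varepsilon)\,T_f(r)+o(T_f(r)).$$

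Fix an integer $N$ divisible by $d$, let $V_N$ be the space of degree-$N$ forms in $x_0,\dots,x_n$, $\ell:=\dim V_N=\binom{N+n}{n}$, and let $F\colon\C\to\PP^{\ell-1}(\C)$ be $f$ followed by the $N$-th Veronese map attached to a fixed basis of $V_N$; since $f$ is algebraically non-degenerate, $F$ is linearly non-degenerate, and $T_F(r)=N\,T_f(r)+O(1)$. \emph{Localization:} applying the Nullstellensatz to any $n+1$ of the $Q_j$ (these have empty common zero, by general position) yields a constant $c>0$ such that at every point the maximum of any $n+1$ of the ratios $|Q_j(f(z))|/\|f(z)\|^d$ is $\ge c$; hence at each $z$ all but at most $n+1$ of the summands $\log\bigl(\|f(z)\|^d/|Q_j(f(z))|\bigr)$ are $O(1)$, and therefore $\sum_j\tfrac1d m_f(r,D_j)$ is, up to $O(1)$, bounded by the integral of $\max_{|S|=n+1}\sum_{j\in S}\tfrac1d\log\bigl(\|f\|^d/|Q_j\circ f|\bigr)$. \emph{Filtration estimate:} for each $(n+1)$-subset $S$ the ring $\C[x_0,\dots,x_n]/(Q_j:j\in S)$ is a complete intersection, and the Evertse--Ferretti combinatorial lemma produces a basis $\psi^S_1,\dots,\psi^S_\ell$ of $V_N$ adapted to the $I_S$-adic filtration ($I_S=(Q_j:j\in S)$) and a constant $\theta_N>0$ with
$$\prod_{i=1}^{\ell}|\psi^S_i(x)|\le C\,\|x\|^{\,\ell N-(n+1)d\theta_N}\prod_{j\in S}|Q_j(x)|^{\theta_N}\qquad(x\in\C^{n+1}\setminus\{0\}).$$
Here $\theta_N=\tfrac1{n+1}\sum_i|\mathbf a^{S,i}|$ with $\mathbf a^{S,i}$ the leading exponent of $\psi^S_i$ in the associated graded ring $\bigl(\C[x]/I_S\bigr)[y_0,\dots,y_n]$; because the Hilbert function of this complete intersection makes the multiset of leading exponents symmetric in the indices of $S$, $\theta_N$ is independent of $S$ and $\theta_N\sim N^{n+1}/\bigl(d\,(n+1)!\bigr)$, so $\ell N/(d\theta_N)\to n+1$ as $N\to\infty$. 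Taking logarithms, the last display reads $\theta_N\sum_{j\in S}\log\bigl(\|x\|^d/|Q_j(x)|\bigr)\le\sum_i\log\bigl(\|x\|^N/|\psi^S_i(x)|\bigr)+O(1)$.

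Writing $L^S_i\subset\PP^{\ell-1}(\C)$ for the hyperplane defined by $\psi^S_i$ and using $\|F\|\asymp\|f\|^N$, the localization bound and the pointwise estimate combine to give
$$\sum_{j=1}^{q}\frac1d m_f(r,D_j)\le\frac1{d\theta_N}\cdot\frac1{2\pi}\int_0^{2\pi}\max_{S}\sum_{i=1}^{\ell}\log\frac{\|F(re^{i\theta})\|}{|L^S_i(F(re^{i\theta}))|}\,d\theta+O(1).$$
For each fixed $S$ the forms $L^S_1,\dots,L^S_\ell$ are linearly independent, so the inner maximum is at most the Cartan maximum over linearly independent $\ell$-subsets of the finite family $\{L^S_i\}_{S,i}$, and Cartan's Second Main Theorem gives
$$\frac1{2\pi}\int_0^{2\pi}\max_K\sum_{i\in K}\log\frac{\|F\|}{|L_i(F)|}\,d\theta\le\ell\,T_F(r)-N_{W_F}(r)+o(T_F(r)),$$
where $N_{W_F}$ counts the zeros of the Wronskian of a reduced representation of $F$, the error term holding outside a set of finite Lebesgue measure. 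Choosing $N$ so that $\ell N/(d\theta_N)\le n+1+\varepsilon$ and using $T_F=N\,T_f+O(1)$, we obtain $\sum_j\tfrac1d m_f(r,D_j)\le(n+1+\varepsilon)T_f(r)-\tfrac1{d\theta_N}N_{W_F}(r)+o(T_f(r))$. It remains to show $\tfrac1{d\theta_N}N_{W_F}(r)\ge\sum_j\tfrac1d\bigl(N_f(r,D_j)-N_f^M(r,D_j)\bigr)$ for a suitable $M$. At a zero $z_0$ of some $Q_j\circ f$, general position forces at most $n$ of the $Q_j\circ f$ to vanish at $z_0$, say for $j$ in a set $T$; choosing an $(n+1)$-subset $S\supseteq T$, the product estimate read as an inequality of vanishing orders at $z_0$ (where $\|f\|$ is a unit) gives $\sum_i\mathrm{ord}_{z_0}(\psi^S_i\circ f)\ge\theta_N\sum_j\mathrm{ord}_{z_0}(Q_j\circ f)$. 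Feeding this into the standard bound $\mathrm{ord}_{z_0}W_F\ge\sum_i\max\bigl(0,\mathrm{ord}_{z_0}(\psi^S_i\circ f)-(\ell-1)\bigr)\ge\max\bigl(0,\theta_N\sum_j\mathrm{ord}_{z_0}(Q_j\circ f)-\ell(\ell-1)\bigr)$ and taking $M$ to be any integer with $\theta_N M\ge\ell(\ell-1)$ (so $M$ is an explicit function of $N$), one checks $\mathrm{ord}_{z_0}W_F\ge\theta_N\sum_j\max\bigl(0,\mathrm{ord}_{z_0}(Q_j\circ f)-M\bigr)$. Summing over $z_0$ and integrating yields the required lower bound, and substituting back gives the reformulated inequality, hence the theorem, with $M$ explicit in $N$ and $N$ explicit in $\varepsilon,n,d$.

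I expect the main difficulty to lie in the Evertse--Ferretti combinatorial lemma and the precise size of $\theta_N$: one must set up the adapted (echelon-type) basis so that each $\psi^S_i$ is genuinely dominated by its leading monomial in the product estimate, verify that $\ell N/(d\theta_N)\to n+1$ and quantify the rate (this is exactly what determines how large $N$, hence $M$, must be), and handle the uniformity so that a single $M$ works over all points $z_0$, all $(n+1)$-subsets $S$, and all indices $j$.
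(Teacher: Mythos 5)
Your outline follows the same overall strategy as the paper's proof of its Main Theorem (which sharpens Yan--Chen): reduce to equal degrees via $Q_j\mapsto Q_j^{d/d_j}$, localize with the Nullstellensatz, build a Corvaja--Zannier-type filtration of $V_N$, pass through a Veronese map to Ru's/Cartan's second main theorem for hyperplanes, and extract the truncation from the Wronskian term. Two differences are worth flagging.

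First, a structural variant: you filter by an ideal $I_S=(Q_j:j\in S)$ generated by $n+1$ forms (so the quotient $\C[x]/I_S$ is Artinian), whereas the paper filters by $n$ forms $\gamma_1,\dots,\gamma_n$ defining a $0$-dimensional subvariety of $\PP^n$, which is the setup of Corvaja--Zannier, Ru's 2004 paper, and An--Wang. Both routes are known to work, but the dimension bookkeeping is cleaner in the $n$-form case: there the successive quotients $W_{(\mathbf i)}/W_{(\mathbf i')}$ stabilize at dimension $d^n$ once $\sigma(\mathbf i)\le\alpha/d-n$ (Lemma~3), which is exactly what feeds the explicit count $\Delta\ge\alpha(\alpha-d)\cdots(\alpha-dn)/\bigl(d(n+1)!\bigr)$. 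Your $(n+1)$-form version requires the analogous analysis of the graded pieces of the Artinian complete intersection, and attributing it to ``Evertse--Ferretti'' is a slight conflation: what you describe is an $I_S$-adic filtration in the Corvaja--Zannier style, not the Chow/Hilbert-weight machinery of Evertse--Ferretti, though either would do.

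Second, and more seriously for a complete proof: the combinatorial heart --- the existence of the adapted basis, the product estimate $\prod_i|\psi^S_i|\le C\|x\|^{\ell N-(n+1)d\theta_N}\prod_{j\in S}|Q_j|^{\theta_N}$, the independence of $\theta_N$ from $S$, and the asymptotic $\ell N/(d\theta_N)\to n+1$ with an explicit rate --- is exactly what you defer, and you say so. This is not a wrong step, since such a lemma is true, but it is precisely where all the content is; the paper supplies it concretely through Lemmas~1--3 (quoted from An--Wang) and then a short explicit computation with $\alpha$ chosen divisible by $d$. Without carrying that out, your proposal establishes the shape of the argument but not the theorem. On the plus side, your Wronskian step is stated a bit more carefully than the paper's: the bound $\mathrm{ord}_{z_0}W_F\ge\sum_i\max\bigl(0,\mathrm{ord}_{z_0}(\psi^S_i\circ f)-(\ell-1)\bigr)$ and the deduction $\mathrm{ord}_{z_0}W_F\ge\theta_N\sum_j\max\bigl(0,\mathrm{ord}_{z_0}(Q_j\circ f)-M\bigr)$ for any $M$ with $\theta_N M\ge\ell(\ell-1)$ is clean and correct, whereas the paper's corresponding passage is terse; note, though, that your choice of $M$ from this inequality is genuinely different from the paper's choice $M=\dim V_\alpha$, and you would need to check which gives the better explicit bound if you intended to recover the quantitative Main Theorem rather than just the qualitative Yan--Chen statement.
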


Note that this is how Yan and Chen state their theorem, but clearly
the $o(T_f(r))$ term can be incorporated into the $\varepsilon T_f(r)$
term on the left.

Yan and Chen's work was based on Ru's \cite{R1} solution of Shiffman's
conjecture, which in turn was based on a new technique of
Corvaja and Zannier \cite{CZ1}.  Yan and Chen's contribution was to
incorporate multiplicity truncation into the counting functions.
When one applies inequalities of second main theorem type, it is often
crucial to the application to have the inequality with truncated counting
functions.  For example, all existing constructions of unique range sets
depend on a second main theorem with truncated counting functions.
Obtaining a number theoretic analog of the second main theorem that includes
truncated counting functions is perhaps one of the most important open problems
in Diophantine approximation.

In a recent preprint, Dethloff and Tran \cite{DT} also include
truncated counting functions in a weak Cartan-type second main theorem
inequality for hypersurfaces in projective space, and they also treat
the case of slowly moving hypersurfaces.

Our purpose in this paper is to give an explicit estimate on the 
truncation level $M$ in Yan and Chen's theorem.
For concrete applications, it is often important
to know explicity at what level the multiplicity in the counting functions
can be truncated.  Our main theorem is:

\begin{main} Let $f: \C \rightarrow \PP^n(\C)$ be an algebraically non-degenerate holomorphic map, and let $D_j, 1 \leqslant j \leqslant q,$ be hypersurfaces in $\PP^n(\C)$ of degree $d_j$ in general position.
Let $d$ be the least common multiple of the $d_j$'s. 
Let $0<\varepsilon <1$ and let
$$M\ge 2d\lceil2^n(n+1)n(d+1)\varepsilon^{-1}\rceil^n$$
Then,
$$ (q-(n+1)-\varepsilon )T_r(f) \leqslant \sum_{j=1}^q d_j^{-1} N_f^{M}(r,D_j),$$
where inequality holds for all large $r$ outside a set of finite Lebesgue measure. 
\end{main}

The main tool which allows us to estimate the truncation level is
the more detailed analysis of Corvaja and Zannier's filtration that
was done by the first author and Wang in \cite{AW}.

Unfortunately, our estimate for $M$ depends on $\varepsilon,$ but for typical
applications one can generally fix a small $\varepsilon$ once and for all,
so our estimate is still useful.

Either by bringing in the standard machinery of value distribution theory
for several variables or by a standard argument averaging over complex
lines, we have the same theorem for meromorphic mappings from
$\C^m$ to $\PP^n(\C).$  For the sake of simplicity, we prefer to work
only with maps from $\C$ leaving the routine details of extending the result
to maps from $\C^m$ to the reader.

We also remark that our main theorem is also true for 
non-Archimedean analytic curves from a complete non-Archimedean
field of characteristic zero.  The algebraic argument involving
the Corvaja and Zannier filtration is identical, and it is simply
a matter of using the non-Archimedean hyperplane second main theorem
in place of Ru's result that we refer to as Theorem~R below.
See \cite{Boutabaa}, \cite{KT}, \cite{CY}, or \cite{HY} for
the non-Archimedean hyperplane second main theorem.  Although each of
those references state the theorem for hyperplanes in general position,
reformulating the result as in Theorem~R below is a routine exercise
left for the reader.

\section{Corvaja and Zannier's filtration and Ru's Hyperplane Theorem} 
\def\theequation{2.\arabic{equation}}
\setcounter{equation}{0} 

In this section we recall Corvaja and Zannier's filtration as made more
explicit in \cite{AW}, and we recall Ru's theorem of second main theorem
type for hyperplanes, which we will need for our proof.
Details of proofs can be found in 
 \cite{CZ1}, \cite{R1}, and also \cite{AW}.

For a fixed big integer $\a$, denote by $V_\a$ the space of homogeneous polynomials of degree $\a$ in $\C[x_0,\dots,x_n]$. 

\begin{lemma} [see \cite{AW}]  \label{lm1} Let $\gamma_1,\dots,\gamma_n$ be homogeneous polynomials in $\C[x_0,\dots,x_n]$ and assume that they define a subvariety of $\PP^n(\C)$ of dimension 0. Then for all $\a \geqslant \sum\limits^n_{i=1} \deg \gamma_i$,
$$ \dim \dfrac{V_\a}{(\gamma_1,\dots,\gamma_n) \cap V_{\a}} = \deg \gamma_1\dots \deg \gamma_n.$$ 
\end{lemma}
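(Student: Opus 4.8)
The plan is to reinterpret the left-hand side as the degree-$\alpha$ graded piece of the ring $S/I$, where $S=\C[x_0,\dots,x_n]$ and $I=(\gamma_1,\dots,\gamma_n)$, and then to read off its dimension from the Hilbert series of $S/I$. Since $I$ is generated by homogeneous elements it is a graded ideal, so $I=\bigoplus_\alpha(I\cap V_\alpha)$ and hence $V_\alpha/(I\cap V_\alpha)\cong(S/I)_\alpha$ as $\C$-vector spaces. Thus the lemma is equivalent to showing that $\dim_\C(S/I)_\alpha=\deg\gamma_1\cdots\deg\gamma_n$ for all $\alpha\ge\sum_{i=1}^n\deg\gamma_i$.

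The first step is to use the geometric hypothesis to see that $\gamma_1,\dots,\gamma_n$ form a regular sequence in $S$. Because they cut out a subvariety of $\PP^n(\C)$ of dimension $0$, none of them is zero or a nonzero constant, so each $\gamma_i$ has positive degree, and the affine cone $V(\gamma_1,\dots,\gamma_n)\subseteq\C^{n+1}$ --- the cone over a finite set of points --- has Krull dimension $1$. Therefore $\dim S/I=1=(n+1)-n=\dim S-n$. Since $S$ is Cohen--Macaulay, a homogeneous sequence of positive-degree elements whose quotient drops in Krull dimension by exactly the length of the sequence is part of a homogeneous system of parameters, and hence is a regular sequence; so $\gamma_1,\dots,\gamma_n$ is a regular sequence. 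I expect this to be the only step that needs an idea; everything afterward is a mechanical generating-function computation.

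With the regular sequence property in hand, I would compute the Hilbert series of $S/I$ by iterating short exact sequences. Writing $d_j=\deg\gamma_j$ and $I_k=(\gamma_1,\dots,\gamma_k)$, multiplication by $\gamma_{k+1}$ is injective on $S/I_k$, so there is an exact sequence of graded modules
$$0\longrightarrow (S/I_k)(-d_{k+1})\xrightarrow{\gamma_{k+1}}S/I_k\longrightarrow S/I_{k+1}\longrightarrow 0,$$
and each step multiplies the Hilbert series by $1-t^{d_{k+1}}$. Starting from $H_S(t)=(1-t)^{-(n+1)}$, this gives
$$H_{S/I}(t)=\frac{\prod_{i=1}^n(1-t^{d_i})}{(1-t)^{n+1}}.$$

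Finally I would extract the relevant coefficient. Cancelling one factor of $1-t$ from each numerator term gives $H_{S/I}(t)=P(t)/(1-t)$, where $P(t)=\prod_{i=1}^n(1+t+\cdots+t^{d_i-1})$ is a polynomial with non-negative coefficients, of degree $\sum_{i=1}^n(d_i-1)=\sum_{i=1}^n d_i-n$, satisfying $P(1)=\prod_{i=1}^n d_i$. Multiplying by $(1-t)^{-1}=\sum_{m\ge0}t^m$, the coefficient of $t^\alpha$ in $H_{S/I}(t)$ equals the sum of the coefficients of $P$ in degrees $0,\dots,\min(\alpha,\sum_{i=1}^n d_i-n)$, which is $P(1)=\prod_{i=1}^n d_i$ as soon as $\alpha\ge\sum_{i=1}^n d_i-n$, and in particular for every $\alpha\ge\sum_{i=1}^n\deg\gamma_i$. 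This is the desired equality, and it also shows that the bound $\alpha\ge\sum_{i=1}^n\deg\gamma_i$ in the statement is slightly more generous than necessary.
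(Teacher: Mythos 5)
The paper itself offers no proof of Lemma~\ref{lm1}; it simply cites \cite{AW}, so there is nothing internal to compare your argument against. That said, your proof is correct and is the standard commutative-algebra route one would expect the cited source to take. The chain of reductions is sound: $V_\a/(I\cap V_\a)\cong(S/I)_\a$ because $I$ is homogeneous; the zero-dimensionality of $V(\gamma_1,\dots,\gamma_n)$ in $\PP^n$ forces each $\gamma_i$ to have positive degree and gives $\dim S/I=1=\dim S-n$; Cohen--Macaulayness of $S$ then upgrades this dimension count to the statement that $\gamma_1,\dots,\gamma_n$ is a regular sequence; the iterated short exact sequences (equivalently, exactness of the Koszul complex) yield the Hilbert series $\prod_i(1-t^{d_i})/(1-t)^{n+1}$; and the coefficient extraction at the end is correct. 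Your remark that the threshold can be improved to $\a\ge\sum_i d_i-n$ is also right, since the polynomial $P(t)=\prod_i(1+t+\cdots+t^{d_i-1})$ has degree $\sum_i d_i-n$; the stated bound $\a\ge\sum_i d_i$ is just a cleaner sufficient condition, and is the one actually used later (the filtration argument only needs $\a\ge nd$). No gaps.
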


Throughout of this paper, we shall use the {\it lexicographic ordering}
on $n$-tuples $(i_1,\dots,i_n) \in \N^n$ of natural numbers. Namely, $(j_1,\dots,j_n)> (i_1,\dots,i_n) $ if and only if for some $b \in \{1,\dots,n\}$ we have $j_l = i_l$ for $l<b$ and $j_b > i_b$. 
Given an $n$-tuples $ ({\ii}) = (i_1,\dots,i_n)$ of non-negative integers, we denote $\sigma({\ii}):= \sum_j i_j.$

Let $\gamma_1,\dots,\gamma_n \in \C[x_0,\dots,x_n]$ be the homogeneous polynomials of degree $d$ that define a zero-dimensional subvariety of $\PP^n(\C).$
We now recall Corvaja and Zannier's filtration of $V_\a.$
 Arrange, by the lexicographic order, the $n$-tuples $ ({\ii}) = (i_1,\dots,i_n)$ of non-negative integers such that $\sigma({\ii}) \leqslant \a/d.$ Define the spaces $W_{(\ii)} = W_{N,{(\ii)}}$ by
$$W_{(i)} = \sum_{({\bf e})\geqslant ({\bf i})} \gamma_1^{e_1}\dots\gamma_n^{e_n}V_{\a - d\sigma({\bf e})}.$$
Clearly,  $W_{(0,\dots,0)} = V_\a$ and $W_{({\bf i})} \supset W_{({\bf i'})}$ if $({\bf i'}) > ({\bf i})$, so the $W_{({\bf i})}$ is a filtration of $V_\a$. 

Next, we recall a result about the quotients of consecutive spaces
in the filtration.

\begin{lemma} [see \cite{AW}]  \label{lm2}There is an isomorphism $$\dfrac{W_{({\bf i})}}{W_{({\bf i'})}} \cong \dfrac{V_{\a-d\sigma({\bf i})}}{(\gamma_1,\dots,\gamma_n) \cap V_{\a-d\sigma({\bf i}) }}.$$
Furthermore, we may choose a basis of $\dfrac{W_{({\ii})}}{W_{({\ii'})}}$ from the set containing all equivalence classes of the form: $\gamma_1^{i_1}\dots\gamma_n^{i_n}\eta$ modulo  $W_{({\ii'})}$ with $\eta$ being a monomial in $x_0,\dots,x_n$ with total degree $\a-d\sigma({\ii})$.
\end{lemma}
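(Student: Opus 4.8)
The plan is to obtain the isomorphism by writing down an explicit surjection of $V_{\a-d\sigma({\bf i})}$ onto $W_{({\bf i})}/W_{({\bf i'})}$ and then identifying its kernel with $(\gamma_1,\dots,\gamma_n)\cap V_{\a-d\sigma({\bf i})}$. Throughout write $\gamma^{({\bf e})}:=\gamma_1^{e_1}\cdots\gamma_n^{e_n}$. First I would observe that a term $\gamma^{({\bf e})}V_{\a-d\sigma({\bf e})}$ is $\{0\}$ as soon as $\sigma({\bf e})>\a/d$, and that $({\bf i'})$ is the immediate successor of $({\bf i})$ in the lexicographic order on the indexing set, so
$$W_{({\bf i})}=\gamma^{({\bf i})}V_{\a-d\sigma({\bf i})}+\sum_{({\bf e})>({\bf i})}\gamma^{({\bf e})}V_{\a-d\sigma({\bf e})}=\gamma^{({\bf i})}V_{\a-d\sigma({\bf i})}+W_{({\bf i'})}.$$
Thus the $\C$-linear map $\varphi\colon V_{\a-d\sigma({\bf i})}\to W_{({\bf i})}/W_{({\bf i'})}$ sending $P$ to $\gamma^{({\bf i})}P+W_{({\bf i'})}$ is surjective, and the whole lemma reduces to computing $\ker\varphi$. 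Granting $\ker\varphi=(\gamma_1,\dots,\gamma_n)\cap V_{\a-d\sigma({\bf i})}$, the isomorphism follows from the first isomorphism theorem, and the ``furthermore'' is then automatic: the monomials $\eta$ of degree $\a-d\sigma({\bf i})$ span $V_{\a-d\sigma({\bf i})}$, hence the classes $\varphi(\eta)=\gamma^{({\bf i})}\eta+W_{({\bf i'})}$ span $W_{({\bf i})}/W_{({\bf i'})}$, and one simply extracts a basis from this spanning set.

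The inclusion $\supseteq$ in the kernel computation is routine. If $P\in V_{\a-d\sigma({\bf i})}$ lies in the ideal, write $P=\sum_{j=1}^n\gamma_j Q_j$ with the $Q_j$ chosen homogeneous of degree $\a-d\sigma({\bf i})-d$; then $\gamma^{({\bf i})}P=\sum_{j}\gamma_1^{i_1}\cdots\gamma_j^{i_j+1}\cdots\gamma_n^{i_n}Q_j$, and each exponent tuple occurring here is obtained from $({\bf i})$ by raising one entry by $1$, hence strictly exceeds $({\bf i})$ lexicographically; so every summand lies in $W_{({\bf i'})}$ and $P\in\ker\varphi$.

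The inclusion $\subseteq$ is the heart of the matter, and it rests on the fact that $\gamma_1,\dots,\gamma_n$ is a regular sequence (in this order) in $R:=\C[x_0,\dots,x_n]$: since these forms define a zero-dimensional subvariety of $\PP^n(\C)$, their ideal has height $n$ in $R$, and in a Cohen--Macaulay ring an ideal of height equal to the number of its generators is generated by a regular sequence (this is exactly the kind of analysis made explicit in \cite{AW}). Given this, the needed inclusion follows from the purely algebraic assertion: \emph{if $\delta_1,\dots,\delta_m$ is a regular sequence in a commutative ring $A$, and $({\bf j})\in\N^m$, $P\in A$ satisfy $\delta_1^{j_1}\cdots\delta_m^{j_m}P\in\sum_{({\bf f})>({\bf j})}\delta_1^{f_1}\cdots\delta_m^{f_m}A$, then $P\in(\delta_1,\dots,\delta_m)$}, applied with $A=R$, $m=n$ and $({\bf j})=({\bf i})$ (note that $\gamma^{({\bf i})}P\in W_{({\bf i'})}$ certainly forces $\gamma^{({\bf i})}P\in\sum_{({\bf e})>({\bf i})}\gamma^{({\bf e})}R$).

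To prove that assertion I would induct on $m$. The case $m=1$ is just that $\delta_1^{j_1}P=\delta_1^{j_1+1}g$ implies, after cancelling the non-zero-divisor $\delta_1^{j_1}$, that $P=\delta_1 g$. For the inductive step, group the right-hand side according to the first coordinate $b$ at which $({\bf f})$ exceeds $({\bf j})$: then $\delta^{({\bf f})}$ is divisible by $\delta_1^{j_1}\cdots\delta_{b-1}^{j_{b-1}}\delta_b^{j_b+1}$, so the hypothesis takes the form $\delta_1^{j_1}\cdots\delta_m^{j_m}P=\sum_{b=1}^m\delta_1^{j_1}\cdots\delta_{b-1}^{j_{b-1}}\delta_b^{j_b+1}T_b$ for suitable $T_b\in A$. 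Transposing the $b=1$ term and cancelling $\delta_1^{j_1}$, then reducing modulo $\delta_1$ and invoking that $\bar\delta_2,\dots,\bar\delta_m$ is a regular sequence in $A/(\delta_1)$, the inductive hypothesis yields $\bar P\in(\bar\delta_2,\dots,\bar\delta_m)$, that is $P\in(\delta_1,\dots,\delta_m)$. The only genuinely substantive point in the whole argument is this cancellation-and-descent step for regular sequences; the lexicographic splitting of the filtration, the easy inclusion, and the passage to a monomial basis are bookkeeping. (If $({\bf i})$ happens to be the largest index, so $W_{({\bf i'})}=\{0\}$, the statement still holds, since then $\a-d\sigma({\bf i})<d$ forces $(\gamma_1,\dots,\gamma_n)\cap V_{\a-d\sigma({\bf i})}=\{0\}$.)
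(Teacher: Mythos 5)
Your argument is correct, and it is the natural one. Note, though, that the paper offers no proof of Lemma~2 at all---it simply cites~\cite{AW}, which in turn follows Corvaja--Zannier~\cite{CZ1}---so there is no in-paper proof to compare against. The route you take (exhibit the surjection $P\mapsto\gamma^{({\bf i})}P\bmod W_{({\bf i'})}$, then compute its kernel using that $\gamma_1,\dots,\gamma_n$ is a regular sequence because they cut out a zero-dimensional set in $\PP^n$ and $\C[x_0,\dots,x_n]$ is Cohen--Macaulay) is precisely the mechanism underlying the Corvaja--Zannier filtration, so it matches the cited source in spirit and in substance. Two small points worth flagging, neither a gap: when you write $W_{({\bf i})}=\gamma^{({\bf i})}V_{\a-d\sigma({\bf i})}+W_{({\bf i'})}$ you are implicitly using that tuples $({\bf e})$ strictly between $({\bf i})$ and the next index $({\bf i'})$ of the filtration have $\sigma({\bf e})>\a/d$ and hence contribute zero, which is worth saying out loud; and in the inductive cancellation step you should note explicitly that $\bar\delta_2,\dots,\bar\delta_m$ being a regular sequence in $A/(\delta_1)$ is exactly part of the definition of $\delta_1,\dots,\delta_m$ being one, so the induction hypothesis genuinely applies. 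With those remarks the proof is complete and correct.
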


We now combine Lemma~\ref{lm1} and Lemma~\ref{lm2} to explicitly calculate the dimension
of the quotient spaces, which we denote by $\Delta_{({\ii})}$.

\begin{lemma} \label{lm3}If  $\sigma({\ii}) \leqslant \a/d - n$ then
$$\Delta_{(\ii)}:= \dim \dfrac{W_{({\bf i})}}{W_{({\bf i'})}} = d^n.$$
\end{lemma}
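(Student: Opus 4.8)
The plan is to combine the two preceding lemmas directly. Lemma~\ref{lm2} gives an isomorphism
$$\dfrac{W_{({\bf i})}}{W_{({\bf i'})}} \cong \dfrac{V_{\a-d\sigma({\bf i})}}{(\gamma_1,\dots,\gamma_n) \cap V_{\a-d\sigma({\bf i})}},$$
so it suffices to compute the dimension of the right-hand side. To do this I would invoke Lemma~\ref{lm1} with the same polynomials $\gamma_1,\dots,\gamma_n$ (which by hypothesis define a zero-dimensional subvariety of $\PP^n(\C)$) but with the degree parameter $\a$ of that lemma replaced by $\a' := \a - d\sigma({\ii})$. Lemma~\ref{lm1} then yields
$$\dim \dfrac{V_{\a'}}{(\gamma_1,\dots,\gamma_n) \cap V_{\a'}} = \deg\gamma_1 \cdots \deg\gamma_n = d^n,$$
since each $\gamma_j$ has degree $d$.

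The one hypothesis of Lemma~\ref{lm1} that must be verified is the inequality $\a' \geqslant \sum_{i=1}^n \deg\gamma_i = nd$. This is exactly where the standing assumption $\sigma({\ii}) \leqslant \a/d - n$ enters: multiplying through by $d$ gives $d\sigma({\ii}) \leqslant \a - nd$, hence $\a' = \a - d\sigma({\ii}) \geqslant nd$, as required. So the chain of equalities goes through and we conclude $\Delta_{(\ii)} = d^n$.

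This argument is essentially bookkeeping — chaining an isomorphism with a dimension formula and checking one numerical inequality — so I do not anticipate a genuine obstacle. The only point requiring mild care is making sure the hypothesis of Lemma~\ref{lm1} is checked for the \emph{shifted} degree $\a - d\sigma({\ii})$ rather than for $\a$ itself, and confirming that the condition $\sigma({\ii}) \leqslant \a/d - n$ is precisely what makes that shifted inequality hold. One should also note in passing that this same condition guarantees $\a - d\sigma({\ii}) \geqslant 0$, so that $V_{\a-d\sigma({\ii})}$ is a genuine (nonzero) space of homogeneous polynomials and the statement of Lemma~\ref{lm2} applies without degenerate cases.
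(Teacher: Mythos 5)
Your proposal is correct and matches the paper's proof essentially verbatim: both apply the isomorphism from Lemma~\ref{lm2} and then invoke Lemma~\ref{lm1} at the shifted degree $\a - d\sigma({\ii})$, noting that the hypothesis $\sigma({\ii}) \leqslant \a/d - n$ is exactly the condition $\a - d\sigma({\ii}) \geqslant nd$ needed for Lemma~\ref{lm1}. Your added remark about nonnegativity of the shifted degree is a harmless extra sanity check, not a deviation in method.
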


\begin{proof} Since the polynomials $\gamma_1,\dots,\gamma_n$ have 
the same degree $d$, it follows 
$$\dim \dfrac{W_{({\bf i})}}{W_{({\bf i'})}} = \dim \dfrac{V_{\a-d\sigma({\bf i})}}{(\gamma_1,\dots,\gamma_n) \cap V_{\a-d\sigma({\bf i}) }} = d^n$$
if $\a - d\sigma({\ii}) \geqslant nd$.
\end{proof}

To prove our result, we also need the following general form of the main theorem for holomorphic curves intersecting hyperplanes which was given by Ru in \cite{R2}.

\begin{theorem R}[Ru \cite{R2}]  Let $f=(f_0:\dots:f_n): \C \rightarrow \PP^n(\C)$ be a holomorphic map whose image it not contained in any proper linear subspace. Let $H_1,\dots,H_q$ be arbitrary hyperplanes in $\PP^n(\C)$. Let $L_j, 1 \leqslant j \leqslant q,$ be the linear forms defining $H_1,\dots,H_q$. Denote by $W(f_0,\dots,f_n)$ the Wronskian of $f_0,\dots,f_n$. Then,
$$ \int_0^{2\pi} \max_K \log \prod_{j \in K} \dfrac{\|f(re^{i\theta})\| \|L_j\|}{| L_j(f)(re^i\theta)|}\dfrac{d\theta}{2\pi} + N_W(r,0) \leqslant (n+1)T_f(r) +o(T_f(r)), $$
where the maximum is taken over all subsets $K$ of $\{1,\dots,q\}$ such that the linear forms $L_j, j \in K$, are linearly independent, and $\|L_j\|$ is the maximum of the absolute values of the coefficients in $L_j$.
\end{theorem R}

\section{Proof of the Main Theorem.}
\def\theequation{3.\arabic{equation}}
\setcounter{equation}{0} 

Let $f=(f_0:\dots:f_n): \C \rightarrow \PP^n(\C)$ be an algebraically non-degenerate holomorphic map, 
and let $D_j, 1 \leqslant j \leqslant q,$ be hypersurfaces in $\PP^n(\C)$ of degree $d_j$ in general position. 
Let $Q_j, 1 \leqslant j \leqslant q,$ be the homogeneous polynomials in $\C[x_0,\dots,x_n]$ of degree $d_j$ defining $D_j$. Of course we may assume
that $q\ge n+1.$

We first claim that it suffices to prove the theorem in the case
that all of the $d_j$ are equal to $d.$  Indeed, if we have the theorem
in that case, then we know that for $\varepsilon$ and $M$ as in the
statement of the theorem that
$$ (q-(n+1)-\varepsilon )T_r(f) \leqslant \sum_{j=1}^q d^{-1} N_f^{M}(r,Q_j^{d/d_j}),$$
where $d$ is the least common multiple of the $d_j.$
Note that if $z \in \C$ is a zero of $Q_j \circ f$ with multiplicity $a,$
then $z$ is zero of $Q_j^{d/d_j} \circ f$ with multiplicity $a\dfrac{d}{d_j}$. This implies that
$$N_f^M(r,Q_j^{\frac{d}{d_j}})
 \leqslant \dfrac{d}{d_j} N_f^{M}(r,Q_j).$$
Hence,
$$ (q-(n+1)-\varepsilon )T_r(f) \leqslant \sum_{j=1}^q d^{-1} N_f^{M}(r,Q_j^{d/d_j}) \leqslant \sum_{j=1}^q d_j^{-1} N_f^{M}(r,Q_j).$$
Therefore, without loss of generality, we may assume that $Q_1, \dots , Q_q$ have
the same degree $d$.

Given $z \in \C$, there exists a renumbering $\{i_1, \dots ,i_q\}$ of the indices $\{1, \dots ,q\}$ such that 
\begin{equation}
|Q_{i_1}\circ f(z)| \leqslant |Q_{i_2}\circ f(z)| \leqslant  \dots  \leqslant |Q_{i_q}\circ f(z)|. \label{3.1}
\end{equation}
Since $Q_{i_j}, 1 \leqslant j \leqslant q $ are in general position, by Hilbert's Nullstellensatz \cite{V}, for any integer $k, 0 \leqslant k \leqslant n $, there is an integer $m_k \geqslant d$ such that
$$x_k^{m_k} = \sum_{j=1}^{n+1} b_{k_j} (x_0,..,x_n) Q_{i_j}(x_0, \dots ,x_n),$$
where $b_{k_j}, 1 \leqslant j \leqslant n+1, 0 \leqslant k \leqslant n,$ are
homogeneous forms with coefficients in $\C$ of degree $m_k - d$. So, 
$$|f_k(z)|^{m_k} \leqslant c_1 \|f(z)\|^{m_k-d} \max \{|Q_{i_1}\circ f(z)|, \dots ,|Q_{i_{n+1}}\circ f(z)| \},$$
where $c_1$ is a positive constant depends only on the coefficients of $b_{k_j}, 1 \leqslant j \leqslant n+1, 0 \leqslant k \leqslant n,$ thus depends only on the coefficients of $Q_i, 1 \leqslant i \leqslant q.$  Therefore, 
\begin{equation}
\|f(z)\|^{d} \leqslant c_1 \max \{|Q_{i_1}\circ f(z)|, \dots ,|Q_{i_{n+1}}\circ f(z)| \}.\label{3.2}
\end{equation}
By (\ref{3.1}) and (\ref{3.2}),
$$ \prod_{j=1}^q \dfrac{\|f(z)\|^d}{|Q_{i_j}\circ f(z)|} \leqslant c_1^{q-n} \prod_{j=1}^n \dfrac{\|f(z)\|^d}{|Q_{i_j}\circ f(z)|}.$$
Hence, by the definition,
\begin{equation}
\sum_{j=1}^q m_f(r,Q_j) \leqslant \int_0^{2\pi} \max_{\{i_1, \dots ,i_n\}}  \log \prod_{j=1}^n \dfrac{\|f(re^{i\theta})\|^d}{|Q_{i_j}(f)(re^{i\theta})|} \dfrac{d\theta}{2\pi} + (q-n) \log c_1. \label{3.3}
\end{equation}

Pick $n$ distinct polynomials $\g_1, \dots ,\g_n\in \{Q_1,  \dots , Q_q\}$. By the  general position 
assumption, they define  a subvariety of dimension $0$ in $\PP^n$.  For a
fixed  integer 
$\alpha$ ($\ge  nd $), which will be chosen later, let 
$V_\alpha$ be the space of homogeneous polynomials of degree $\alpha$ in $\C[x_0, \dots ,x_n]$.   
In the previous section we recalled a 
filtration $W_{({\ii})}$ of $V_\alpha$ with
$$\Delta_{(\ii)}:= \dim \dfrac{W_{({\bf i})}}{W_{({\bf i'})}} = d^n,$$
 for any $({\ii'}) >({\ii})$ consecutive $n$-tuples
with $\sigma(\ii)\le\alpha/d-n.$

Set $M = \dim V_\alpha$.  We will see that we will be able to truncate
our counting functions to level $M,$ and we will later give an estimate
on $\dim V_\alpha.$

We now 
recall Corvaja and Zannier's choice of a
suitable basis $\{\psi_1, \dots ,\psi_M\}$ for $V_\alpha.$
Start with the last nonzero $W_{({\ii})}$ and pick any basis for it.
Then continue inductively as follows: suppose $({\ii'}) >({\ii})$ are
consecutive $n$-tuples such that $d\sigma({\ii}), d\sigma({\ii'}) \leqslant \alpha$ and assume that we have chosen a basis of $W_{({\ii'})}$. It follows directly from the definition that we may pick representatives in $W_{({\ii})}$ for the quotient space $W_{({\ii})}/W_{({\ii'})}$, of the form $\gamma_1^{i_1} \dots \gamma_n^{i_n}\eta$, where $\eta \in V_{\alpha - d\sigma({\ii})}$. Extend the previously constructed basis in $W_{({\ii'})}$ by adding these representatives. In particular, we have obtained a basis for $W_{({\ii})}$ and our inductive procedure may go on unless $W_{({\ii})} = V_\alpha$, in which case we stop. In this way, we have obtained a basis $\{\psi_1, \dots ,\psi_M\}$ for $V_\alpha$.

Let $\phi_1, \dots ,\phi_M$ be a fixed basis of $V_\alpha$. Then $\{\psi_1, \dots ,\psi_M\}$ can written as linear forms $L_1, \dots ,L_M$ in $\phi_1, \dots ,\phi_M$ so that $\psi_t(f) = L_t(F)$, where
$$
	F=(\phi_1(f): \dots :\phi_M(f)):\C\to \PP^{M-1}.
$$
The linear forms $L_1,\dots,L_M$ are linearly independent, and we know from the 
assumption that $f$ is algebraically non-degenerate
that $F$ is linearly non-degenerate.  

For $z\in \C$, we now estimate $\log \prod_{t=1}^M|L_t(F)(z)| = \log \prod_{t=1}^M|\psi_t(f)(z)| $. Let $\psi$ be an element of the basis, constructed with respect to $W_{({\ii})}/W_{({\ii'})}.$ Then, we have $\psi = \gamma_1^{i_1}\dots\gamma_n^{i_n}\eta$, where $\eta \in V_{\alpha-d\sigma({\bf i})}$. 
Thus, we have a bound
\begin{align*}
|\psi\circ f(z)| &\leqslant |\gamma_1 \circ f(z)|^{i_1}\dots|\gamma_n \circ f(z)|^{i_n}|\eta \circ f(z)|\\
& \leqslant c_2 |\gamma_1 \circ f(z)|^{i_1}\dots|\gamma_n  \circ f(z)|^{i_n}\|f(z)\|^{\alpha - d \sigma({\bf i})},
\end{align*}
where $c_2$ is a positive constant that depends only on $\psi$, 
but not on $f$ or $z$. Observe that there are precisely $\Delta_{({\bf i})}$ such functions $\psi$ in our basis. 
Hence,
\begin{align*}\log  |\psi_t \circ f(z)|& \leqslant i_1 \log|\gamma_1 \circ f(z)| +\dots+ i_n \log | \gamma_n \circ f(z)| + (\alpha - d \sigma({\bf i})) \log \|f(z)\|+c_3\\
&\le i_1\Big(\log |\gamma_1 \circ f(z)|-\log \|f(z)\|^d\Big)+\dots+i_n\Big(\log |\gamma_1 \circ f(z)|-\log \|f(z)\|^d\Big)\\
&\quad \ \ +\alpha \log \|f(z)\|+c_3\\
&\le -i_1\log \dfrac{\|f(z)\|^d}{| \gamma_1\circ f(z)|}-\dots-i_n\log \dfrac{\|f(z)\|^d}{| \gamma_n \circ f(z)|}+\alpha \log \|f(z)\|+c_3.
\end{align*}
Therefore,
\begin{align*}
\log \prod_{t=1}^M|L_t\circ F(z)| &=\log \prod_{t=1}^M |\psi_t \circ f(z)| \cr
& \leqslant - \sum_{({\bf i})} \Delta_{({\bf i})} \Big(i_1\log \dfrac{\|f(z)\|^d}{| \gamma_1 \circ f(z)|}+\cdots+i_n\log \dfrac{\|f(z)\|^d}{| \gamma_n \circ f(z)|}\Big)\cr
&\quad \ +M\alpha \log \|f(z)\|+Mc_3\cr
&\le-\Big(\sum_{({\bf i})} \Delta_{({\bf i})} i_j\Big )\sum_{j=1}^n\log\dfrac{\|f(z)\|^d}{| \gamma_j(f)(z)|}+M\alpha \log \|f(z)\|+Mc_3\cr
&\le -\Delta\log\prod_{j=1}^n \dfrac{\|f(z)\|^d}{| \gamma_j(f)(z)|}+M\alpha \log \|f(z)\|+Mc_3, 
\end{align*}
where  the summations are taken over the $n$-tuples with $\sigma{({\bf i})} \leqslant \alpha/d$; and $\Delta:=\sum_{({\bf i})} \Delta_{({\bf i})} i_j$. This implies 
\begin{align}
 \log \prod_{j=1}^n \dfrac{\|f(z)\|^d}{| \gamma_j \circ f(z)|} \leqslant &\frac 1\Delta\log \prod_{t=1}^M \dfrac{\|F(z)\|}{| L_t \circ F(z)|}-\frac M\Delta\log\|F(z)\| 
+ \frac{M\alpha}\Delta\log \|f(z)\|+ c_4. \label {3.10}
\end{align}

Since there are only finitely many choices $\{\gamma_1, \dots ,\gamma_n\} \subset \{Q_1, \dots , Q_q\}$, we have a finite collection of linear forms $L_1, \dots ,L_u$. From (\ref{3.10}) we have
\begin{align*}
& \int_0^{2\pi} \max_{\{i_1, \dots ,i_n\}} \log \prod_{k=1}^n \dfrac{\|f(re^{i\theta})\|^d}{|Q_{i_k}(f)(re^{i \theta})|}\dfrac{d\theta}{2\pi}\\
&\leqslant \frac 1\Delta \int_0^{2\pi} \max_{K} \log \prod_{j \in K} \dfrac{\|F(re^{i\theta})\| \|L_j\|}{|L_j(F)(re^{i \theta})|}\dfrac{d\theta}{2\pi}- \frac M\Delta T_F(r) + \frac{M\alpha}\Delta T_f(r) + c_5,
\end{align*}
where $\max_K$ is taken over all subsets $K$ of $\{1, \dots ,u\}$ such that linear forms $L_j, j \in K$, are linearly independent, 
and $c_5$ is constant independent of $r$. 
Applying Theorem~R to the holomorphic map $F:\C\to \PP^{M-1}$ 
and the hyperplanes defined by the linear forms $L_1, \dots ,L_u$, and then 
using (\ref{3.3}), and so we have
\begin{align*}
\sum_{i=1}^qm_f(r, Q_i)\le& \int_0^{2\pi} \max_{\{i_1, \dots ,i_n\}} \log \prod_{k=1}^n \dfrac{\|f(re^{i\theta})\|^d}{|Q_{i_k}(f)(re^{i \theta})|}\dfrac{d\theta}{2\pi}+c_6\\ 
&\leqslant -\frac1\Delta N_W(r,0) + o(T_F(r)) +\frac{M\alpha}\Delta T_f(r),
\end{align*}
where $W$ is the Wronskian of $F_1, \dots ,F_M$. 
By the First Main Theorem, 
$$
	T_F(r) \leqslant \alpha T_f(r) + O(1),
$$
and hence we have
\begin{align}
&(qd- \frac{M\alpha}\Delta ) T_f(r) \leqslant  \sum_{j=1}^q N_f(r,Q_j) -\frac 1\Delta N_W(r,0)+o(T_f(r)). \label{3.11}
\end{align}

We will now estimate the left hand side of (\ref{3.11}).
Since the number of non-negative integer $m$-tuples with sum $\leqslant S$ is equal to the number  of non-negative integer $(m+1)$-tuples with sum exactly $S \in \Z$, which is $\binom{S+m}{m}$. Assume $\alpha$ is divisible by $d$, it follows  from Lemma~3  that,
\begin{align*}\Delta:=\sum_{\sigma({\bf i})\le \alpha/d}i_j\Delta_{({\bf i})}
&\ge  \sum_{\sigma({\bf i})\le \alpha/d-n}i_j\Delta_{({\bf i})}
=\frac {d^n}{n+1}\sum_{({\bf \widehat i})}\sum_{j=1}^{n+1} i_j\cr
&=\frac{d^n}{n+1}\sum_{({\bf \widehat i})}(\alpha/d-n)
=\frac{d^n}{n+1}{\alpha/d \choose n}(\alpha/d-n)\cr
&=\frac{\alpha(\a-d)\cdots(\a-dn)}{d(n+1)!},
\end{align*}
where the sum $\sum_{({\bf
\widehat i})}$ is taken over the nonnegative integer $(n+1)$-tuples with
sum exactly $\alpha/d-n$.
On the other hand, 
\begin{equation}
M={\alpha+n \choose n}. \label{3.5}
\end{equation}
Then,
$$
	\frac{M\alpha}{\Delta}\le 
	\frac{(\a+1)\cdots(\a+n)d(n+1)}{(\a-d)\cdots(\a-nd)} 
	\le d(n+1)\left(\frac{\alpha+n}{\alpha-nd}\right)^{n}
$$
If we choose 
$$\alpha= d\lceil2(n+1)(nd+n)(2^{n}-1)\varepsilon^{-1}\rceil+3nd,$$ 
then $\alpha$ is divisible by $d$ and one finds
\begin{align*}
	\left(\frac{\alpha+n}{\alpha-nd}\right)^{n}&=\left(1+\frac{n+nd}{\alpha-nd}\right)^{n}
=1+ \sum_{r=1}^{n}{ n\choose r}\left(\frac{nd+n}{\a-nd}\right)^r\\
&\le 1+ (2^{n}-1)\frac{nd+n}{\a-nd} \le 1+\frac{\varepsilon}{2d(n+1)},
\end{align*}
and hence
$$
	\left(qd- \frac{M\alpha}\Delta \right) T_f(r) 
\ge d\left(q-n-1-\frac\varepsilon 2\right)T_f(r).
$$
Thus,
\begin{equation}\label{alpha}
	\left(qd- \frac{M\alpha}\Delta \right) T_f(r) 
	-o\big(T_f(r)\big)
\ge d(q-n-1-\varepsilon )T_f(r),
\end{equation}
for all $r$ sufficiently large.

We  now estimate $\sum_{j=1}^q N_f(r,Q_j) -\frac1\Delta  N_W(r,0)$
on the right hand side of (\ref{3.11}).
For each $z \in \C$, without loss of generality, we assume that $Q_j \circ f$ vanishes at $z$ for $1 \leqslant j \leqslant q_1$ and $Q_j \circ f$ does not vanish at $z$ for $j > q_1$. By the hypothesis that the $Q_j$ are 
in general position, we know $q_1 \leqslant n$. There are integers $k_j \geqslant 0 $ and nowhere vanishing holomorphic functions $g_j$ in a neighborhood $U$ of $z$ such that 
$$Q_j \circ f = (\zeta-z)^{k_j}g_j, \text{ for } j =1, \dots ,q,$$
where $k_j = 0$ if $q_1 < j \leqslant q.$ For $\{Q_1, \dots ,Q_n\} \subset \{Q_1, \dots ,Q_q\}$, we can obtain a basis $\{\psi_1, \dots ,\psi_M\}$ of $V_\alpha$ and linearly independent linear forms $L_1, \dots ,L_M$ such that $\psi_t(f) = L_t(F)$.
From basic properties of Wronskians,
\begin{align*}
W &= W(F_1, \dots ,F_M) = CW(L_1(F), \dots ,L_M(F)) \\
& = C 
\begin{vmatrix}\psi_1(f) & \dots &\psi_M(f) \\ (\psi_1(f))' & \dots &(\psi_M(f))' \\
\vdots &\ddots &\vdots \\
(\psi_1(f))^{(M-1)} & \dots &(\psi_M(f))^{(M-1)}
 \end{vmatrix}.
\end{align*}
Let $\psi$ be an element of basis, constructed with respect to $W_{({\bf i})}/W_{({\bf i'})}$, so we may write $\psi = Q_1^{i_1}\cdots Q_n^{i_n}\eta,\,  \eta \in V_{N-d\sigma({\bf i})}$. We have
$$\psi(f) = (Q_1(f))^{i_1}\cdots (Q_n(f))^{i_n}\eta(f),$$
where $(Q_j(f))^{i_j} = (\zeta-z)^{i_j.k_j}g_j^{i_j}, j =1, \dots ,n.$
Also we can assume that $k_j \geqslant M$ if $1 \leqslant j \leqslant q_0$ and  $1 \leqslant k_j < M$ if $q_0 < j \leqslant q_1$. And we observe that there are $\Delta_{({\ii})}$ such $\psi$ in our basis. Thus $W$ vanishes at $z$ with order at least
\begin{align*}
\sum_{({\ii})} \bigg(\sum_{j=1}^{q_0} i_j(k_j - M) \bigg) \Delta_{({\ii})} = \sum_{({\ii})}i_j \Delta_{({\ii})} \sum_{j=1}^{q_0}(k_j - M)=\Delta \sum_{j=1}^{q_0}(k_j - M).
\end{align*}
Therefore,
\begin{align}\sum_{j=1}^q N_f(r,Q_j) -\frac1\Delta  N_W(r,0)\le \sum_{j=1}^q N_f^M(r,Q_j).\label{b}\end{align}

Combining  the formulas (\ref{3.11}),   (\ref{b}) and (\ref{alpha}) together,
$$
d(q-n-1-\varepsilon)T_f(r)\le \sum_{j=1}^q N_f^M(r,Q_j),
$$
for all $r$ sufficiently large outside of a set of finite Lebesgue measure.

The proof is completed by (\ref{3.5}) for $0< \varepsilon< 1$:
\begin{align*}
M &\le \frac{(\alpha+n)^n}{n!}
\le \frac{\left(d\lceil2(n+1)(nd+n)(2^{n}-1)\varepsilon^{-1}\rceil+4nd)\right)^n}{n!}\\
&\le 2d\lceil2^n(n+1)n(d+1)\varepsilon^{-1}\rceil^n.
\end{align*}

\end{document}